\newcommand{\T}{{\mathcal S}}
\newcommand{\Prob}{{\operatorname{\mathsf {Prob}}}}
\newcommand{\rest}{{\mathord{\restriction}}}
\newcommand{\dom}{{\operatorname{\mathsf {dom}}}}
\newcommand{\N}{{\mathcal N}}
\newcommand{\<}{\langle}
\renewcommand{\>}{\rangle}
\newcommand{\lft}[2]{\mathopen\ifcase#1{}\oo\or
                        \big#2\or\Big#2\else\oo\fi}
\newcommand{\rgt}[2]{\mathclose\ifcase#1{}\oo\or
                        \big#2\or\Big#2\else\oo\fi}
\theoremstyle{plain}
\newtheorem{theorem}{Theorem}
\theoremstyle{plain}
\newtheorem{lemma}[theorem]{Lemma}
\newtheorem{definition}[theorem]{Definition}
\begin{document}
\mbox{}
\title{A note on small sets of reals}
\author{Tomek Bartoszynski}
\address{National Science Foundation\\
Division of Mathematical Sciences\\
2415 Eisenhower Avenue\\
Alexandria, VA 22314\\
USA}

\email{tbartosz@nsf.gov}
\author{Saharon Shelah}
\thanks{ Second author was partially supported by
  NSF grants DMS-1101597 and DMS-136974, publication 1139}
\address{Department of Mathematics\\
Hebrew University\\
Jerusalem, Israel}
\email{shelah@math.huji.ac.il, http://math.rutgers.edu/\char 126 shelah/}
\keywords{}
%\subjclass{03,05}
\begin{abstract}
We construct an example of a combinatorially large measure zero set.
\end{abstract}
\maketitle

\section{Introduction.}
 We will work in the space $2^\omega$ equipped with standard topology
 and measure. More specifically, the topology is generated by basic
 open sets of form $[s]=\{x \in 2^\omega: s \subset x\}$ for $s \in
 2^a, \ a\in \omega^{<\omega}$. The measure is the standard product measure such
 that $\mu([s])=2^{-|\dom(s)|}$ and let $\N$ be the
 collection of all measure zero sets.

Measure zero sets in $2^\omega$ admit the following representation
(see lemma \ref{basic}):

$X \in \N$  iff and only
if there exists a sequence $\{F_n: n \in \omega\}$ such that
\begin{enumerate}
\item $F_n \subseteq 2^n$ for $n \in \omega$,
\item $\sum_{n \in \omega} \dfrac{|F_n|}{2^n} < \infty$,
\item $X \subseteq \{x \in 2^\omega: \exists^\infty n \ x \rest n \in
  F_n\}$.
\end{enumerate}

The main drawback of this representation is that sets $F_n$ have
overlapping domains. The following definitions from \cite{Bar88Cov} and \cite{BJbook}
offer a refinement.

\begin{definition}
\begin{enumerate}
\item A set $X \subseteq 2^\omega$ is {\em small} ($X \in \T$) if there exists a sequence
$\{I_n,J_n: n\in \omega\}$ such that
\begin{enumerate}
\item $I_n \in [\omega]^{<\aleph_0} $ for $n \in \omega$,
\item $I_n \cap I_m=\emptyset$ for $n \neq m$,
\item $J_n \subseteq 2^{I_n}$ for $n \in \omega$,
\item $\sum_{n \in \omega} \dfrac{|J_n|}{2^{|I_n|}} < \infty$
\item $X \subseteq \{x \in 2^\omega: \exists^\infty n \ x \rest I_n \in
  J_n\}$
\end{enumerate}
\text{Without loss of generality we can assume that $\{I_n: n \in \omega\}$
is a partition of $\omega$ into finite sets.}
\item We say that $X$ is {\em small}$^\star$ ($X \in \T^\star$) if in
addition sets
$I_n$ are disjoint  intervals, 
that is,  if there exists a strictly increasing
sequence of integers $\{k_n: n \in \omega\}$ such that
$I_n=[k_n,k_{n+1})$ for each $n$.
\end{enumerate}
Let $(I_n,J_n)_{n \in \omega}$ denote the set $\{x \in 2^\omega:
\exists^\infty n \ x \rest n \in 
  J_n\}$.
\end{definition}

It is clear that $\T^\star \subseteq \T \subseteq \N$.

Small sets are useful because of their combinatorial simplicity. To
test that $x \in X \in \T$ the real $x$ must pass infinitely many {\it
  independent} tests as in Borel-Cantelli lemma. Furthermore, various
structurally simple measure zero
sets are small.
In
particular,
\begin{enumerate}
\item if $X \in \N$ and $|X|<2^{\aleph_0} $ then $X \in \T^\star$,
  \cite{Bar88Cov}
\item if $X$ is contained in a countable union of closed measure zero
  sets then $X \in \T^\star$, \cite{BJbook}
\item if $F $ is a filter on $\omega$ (interpreted as a subset of
  $2^\omega$) and $F \in \N$ then $F \in \T^\star$, \cite{Bar92Stru}, \cite{Tal2013}
\end{enumerate}

\begin{definition}
For  families of sets ${\mathcal A}, {\mathcal B}$ let ${\mathcal A} \oplus {\mathcal
  B}$ be 
$$\{ X : \exists a \in {\mathcal A} \ \exists b  \in {\mathcal B} \ (X
\subset a\cup b) \}$$
\end{definition} 

Clearly, if ${\mathcal J}$ is an ideal then ${\mathcal J} \oplus
{\mathcal J} = {\mathcal J}$. Likewise, ${\mathcal A} \cup ({\mathcal
  A}\oplus {\mathcal A}) \cup ({\mathcal
  A}\oplus {\mathcal A}\oplus {\mathcal A}) \cup \dots$ is an ideal
for any ${\mathcal A}$.

\begin{theorem}\label{old}\cite{Bar88Cov}

$\T^\star\oplus \T^\star= \T \oplus \T= \N = \N\oplus \N$.

\end{theorem}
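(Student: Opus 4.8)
The whole theorem reduces to the single inclusion $\N\subseteq\T^\star\oplus\T^\star$: since $\T^\star\subseteq\T\subseteq\N$ and $\N$ is a $\sigma$-ideal one already has $\T^\star\oplus\T^\star\subseteq\T\oplus\T\subseteq\N\oplus\N=\N$, so it suffices to show that every measure zero set is covered by the union of two $\T^\star$ sets. The plan is: fix $X\in\N$, fix a representation $F_n\subseteq 2^n$ with $c_n:=|F_n|/2^{n}$, $\sum_n c_n<\infty$, $X\subseteq\{x:\exists^\infty n\ x\rest n\in F_n\}$, and build from the $F_n$ two systems of interval tests.

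First I would push the tests up onto a very sparse set of levels. Replacing $F_n\subseteq 2^n$ by the set of its length-$m$ extensions ($m>n$) changes neither $\{x:x\rest n\in F_n\}$ nor the weight $|\cdot|/2^{m}=c_n$; so, choosing recursively $0=m_0<m_1<m_2<\cdots$ with $m_{t+1}>m_t$ and $\sum_{n\ge m_{t+1}}c_n<2^{-2m_t}$, and unioning onto level $m_k$ all the (extended) $F_n$ with $m_{k-1}<n\le m_k$, I obtain $H_k\subseteq 2^{m_k}$ with weight $h_k:=|H_k|/2^{m_k}\le\sum_{m_{k-1}<n\le m_k}c_n$, and $X\subseteq\{x:\exists^\infty k\ x\rest m_k\in H_k\}$ — a witness $x\rest n\in F_n$ becomes a witness at the level $m_k$ of its block, the map sending each level $n$ with $x\rest n\in F_n$ to the index $k$ of its block is finite-to-one, so infinitely many witnesses survive. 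The only estimate that matters is that, by the choice of $m_{k-1}$, $h_k<2^{-2m_{k-2}}$ for every $k\ge2$.

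Then I would throw away, in each $H_k$, the coordinates below $m_{k-2}$: set $I_k:=[m_{k-2},m_k)$ and $G_k:=\{s\rest I_k:s\in H_k\}\subseteq 2^{I_k}$, so that $x\rest m_k\in H_k$ implies $x\rest I_k\in G_k$, while $|G_k|/2^{|I_k|}\le h_k\cdot 2^{m_{k-2}}<2^{-m_{k-2}}$ for $k\ge2$. The $I_k$ overlap in general, but the even-indexed family $\{[m_{2j-2},m_{2j}):j\ge1\}$ partitions $\omega$ into consecutive intervals, and so does the odd-indexed family $\{[m_{2j-1},m_{2j+1}):j\ge0\}$ (reading $m_{-1}:=0$); and $\sum_k|G_k|/2^{|I_k|}<\infty$. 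Hence $Y_0:=(I_{2j},G_{2j})_{j\ge1}$ and $Y_1:=(I_{2j+1},G_{2j+1})_{j\ge0}$ are $\T^\star$ sets. Finally, given $x\in X$, one has $x\rest m_k\in H_k$ for infinitely many $k$, hence for infinitely many $k$ of one fixed parity, hence $x\in Y_0$ or $x\in Y_1$; so $X\subseteq Y_0\cup Y_1$.

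I expect the crux to be the weight bookkeeping of the pushing step, and specifically the recognition that one must \emph{not} try to forget the prefix below $m_{k-1}$: turning a test on $[0,m_k)$ into a test on $[m_{k-1},m_k)$ costs a factor $2^{m_{k-1}}$, and when the $c_n$ decay slowly no choice of the $m_k$ makes $\sum_k h_k2^{m_{k-1}}$ finite — which is precisely why one $\T^\star$ set cannot do the job. One can, though, make $h_k$ beat $2^{m_{k-2}}$ (pick $m_{k-1}$ far enough out after $m_{k-2}$ is fixed), and the intervals $[m_{k-2},m_k)$ are short enough that their even-indexed and odd-indexed copies each tile $\omega$; that interplay between the weight estimate and the length of the forgotten prefix is the one genuinely non-routine point.
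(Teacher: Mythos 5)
Your proof is correct and follows essentially the paper's own argument: it builds the same two interleaved families of consecutive intervals, governed by the same kind of tail estimate chosen precisely to defeat the $2^{m_{k-2}}$ cost of forgetting the prefix. The paper's interleaved sequences $n_0<m_0<n_1<m_1<\cdots$ (with $2^{n_k}\sum_{i\ge m_k}|F_i|/2^i<\varepsilon_k$) and its direct definition of $J_k$ as the set of $s$ on $[n_k,n_{k+1})$ compatible with some $t\in F_i$ for $i\in[m_k,n_{k+1})$ coincide, up to renaming $n_j:=m_{2j}$ and $m_j:=m_{2j+1}$, with your pushed-up levels and projected sets $G_k$; the ``push up to a sparse set of levels, then project'' packaging is only a mild streamlining of the bookkeeping.
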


The main result of this paper is to show that the above result is best
possible, that is 
$\T^\star \subsetneq \T \subsetneq \N$. It was known (\cite{Bar88Cov})
that $\T^\star \subsetneq \N$.

\section{Preliminaries}

To make the paper complete and self contained we present a review of
known results.

\begin{lemma}\label{basic}
Suppose that $X \subset 2^\omega$. $X$ has measure zero iff and only
if there exists a sequence $\{F_n: n \in \omega\}$ such that
\begin{enumerate}
\item $F_n \subseteq 2^n$ for $n \in \omega$,
\item $\sum_{n \in \omega} \dfrac{|F_n|}{2^n} < \infty$,
\item $X \subseteq \{x \in 2^\omega: \exists^\infty n \ x \rest n \in
  F_n\}$.
\end{enumerate}
\end{lemma}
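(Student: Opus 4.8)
The plan is to read off both implications directly from the definition of $\N$ as the $\sigma$-ideal of sets coverable by open sets of arbitrarily small measure, using only the elementary facts that every open $U \subseteq 2^\omega$ is a countable union of basic clopen sets $[s]$ and that one may take such a decomposition with the generating strings pairwise incompatible, in which case $\mu(U) = \sum 2^{-|s|}$.

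For the direction ``$\{F_n\}$ exists $\Rightarrow X \in \N$'', I would put, for each $m \in \omega$, $U_m = \bigcup_{n \ge m}\bigcup_{s \in F_n}[s]$. Then $\mu(U_m) \le \sum_{n \ge m}|F_n|2^{-n} \to 0$ by condition (2), and every $x$ satisfying $\exists^\infty n\ x \rest n \in F_n$ lies in $\bigcap_m U_m$; hence that set, and therefore $X$, is null.

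For the converse, given $X \in \N$ I would fix for each $k \in \omega$ an open $U_k \supseteq X$ with $\mu(U_k) \le 2^{-k}$ and write $U_k = \bigcup_i [s^k_i]$ with the strings $s^k_i$ pairwise incompatible, so that $\sum_i 2^{-|s^k_i|} = \mu(U_k)$. The one step that needs attention is making the string lengths grow: replacing every $s^k_i$ with $|s^k_i| < k$ by its finitely many extensions of length $k$ changes neither $U_k$ nor $\mu(U_k)$ nor the incompatibility, but guarantees $|s^k_i| \ge k$ for all $i$. Setting $F_n = \{\, s^k_i : k,i \in \omega,\ |s^k_i| = n \,\} \subseteq 2^n$, a rearrangement of absolutely convergent sums yields
\[
  \sum_n \frac{|F_n|}{2^n} \;=\; \sum_{s \in \bigcup_n F_n} 2^{-|s|} \;\le\; \sum_k \sum_i 2^{-|s^k_i|} \;=\; \sum_k \mu(U_k) \;\le\; \sum_k 2^{-k} \;<\; \infty,
\]
which is (2); and for $x \in X$ and any $k$, from $x \in U_k$ we get some $i$ with $x \rest |s^k_i| = s^k_i \in F_{|s^k_i|}$ and $|s^k_i| \ge k$, so $x \rest n \in F_n$ for infinitely many $n$, which is (3).

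The only real obstacle is precisely this unboundedness of the witnessing levels $n$: without the subdivision step one would know only that $x$ extends some string in $F_{|s^k_i|}$ with no lower bound on $|s^k_i|$, which does not suffice for the ``$\exists^\infty$'' clause. Once that is arranged, the remainder is bookkeeping with series of nonnegative terms.
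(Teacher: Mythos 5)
Your argument is correct and mirrors the paper's proof step for step: both directions use exactly the same decomposition (cover $X$ by open $U_k$ with $\mu(U_k)\le 2^{-k}$, split each $U_k$ into pairwise disjoint basic clopen sets, and collect the generating strings by length). One small remark: the subdivision step you insert to guarantee $|s^k_i|\ge k$ is actually automatic, since $[s^k_i]\subseteq U_k$ gives $2^{-|s^k_i|}\le\mu(U_k)\le 2^{-k}$, hence $|s^k_i|\ge k$ already; this is precisely the unstated observation the paper leans on when it asserts condition (3) without further comment.
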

\begin{proof}
$\longleftarrow$ Note that 
$\{x \in 2^\omega: \exists^\infty n \ x \rest n \in
  F_n\} = \bigcap_{m \in \omega} \bigcup_{n \geq m} \{x \in 2^\omega:  x \rest n\in
  F_n\}$. 
Now,
$$\mu\left(\bigcup_{n \geq m} \{x \in 2^\omega:  x \rest n\in
  F_n\}\right)\leq \sum_{n \geq m}  \mu\left(\{x \in 2^\omega:  x \rest n\in
  F_n\}\right) \leq \sum_{n\geq m} \dfrac{|F_n|}{2^n}
\longrightarrow
0.$$

$\longrightarrow$ If $X$ has measure zero then there exists a sequence
of open sets $\{U_n: n \in \omega\}$ such that 
\begin{enumerate}
\item $\mu(U_n) \leq 2^{-n}$, for each $n$,
\item $X \subseteq \bigcap_{n\in \omega} U_n$.
\end{enumerate}
Find a sequence of $\{s^n_m: n,m\in \omega\}$ such that
\begin{enumerate}
\item $s^n_m \in 2^{<\omega}$,
\item $[s^n_m] \cap [s^n_k] =\emptyset$ when $k \neq m$,
\item $U_n =\bigcup_{m \in \omega} [s^n_m]$.
\end{enumerate}

For $k \in \omega$ let $F_k=\{s^n_m: n,m \in \omega, \
|s^n_m|=k\}$. Note that $X \subseteq \{x \in 2^\omega: \exists^\infty
k \ x \rest k \in F_k\}$ and that $\sum_{k \in \omega}
\dfrac{|F_k|}{2^k} \leq \sum_{n \in \omega} \mu(U_n) \leq 1$.
\end{proof}

\begin{theorem}\label{old1}\cite{Bar88Cov}
$\T^\star\oplus \T^\star= \T \oplus \T= \N$.
\end{theorem}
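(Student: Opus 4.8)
The plan is as follows. Since $\T^\star \subseteq \T \subseteq \N$ and $\N$ is a $\sigma$-ideal (so $\N \oplus \N = \N$), we automatically get $\T^\star \oplus \T^\star \subseteq \T \oplus \T \subseteq \N$, so the only thing to prove is the reverse inclusion $\N \subseteq \T^\star \oplus \T^\star$: every measure zero set is the union of two small$^\star$ sets. So fix $X \in \N$ and, via Lemma~\ref{basic}, fix $F_n \subseteq 2^n$ with $v_n := |F_n|/2^n$, $\sum_n v_n < \infty$, and $X \subseteq \{x : \exists^\infty n\ x \rest n \in F_n\}$. The obstruction to small$^\star$-ness is that a ``test at level $n$'' (the condition $x\rest n \in F_n$) constrains the coordinates $[0,n)$, which cuts across many blocks of any partition of $\omega$ into intervals. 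The quantitative point I would use is: if $[m,m')$ is an interval with $m \le n < m'$ and $P_n \subseteq 2^{[m,n)}$ is the projection of $F_n$ to the coordinates $[m,n)$, then the set of $u \in 2^{[m,m')}$ extending some member of $P_n$ has relative size at most $|F_n|\,2^{-(n-m)} = 2^m v_n$; this is small exactly when $n$ is \emph{deep} inside $[m,m')$, i.e.\ when $m$ is far below $n$. No single interval partition can place every level deep inside its block, but two suitably \emph{staggered} partitions can.

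Concretely, I would choose $0 = m_0 < m_1 < m_2 < \cdots$ recursively so that the tail $\sum_{n \ge m_k} v_n$ is below $2^{-m_{k-1}-k}$ for each $k \ge 1$; this is legitimate because these tails tend to $0$ while the right-hand side depends only on the already-chosen $m_{k-1}$. Let the first partition be the intervals $I^1_i = [m_{2i}, m_{2i+2})$ for $i \ge 0$, and the second be $I^2_0 = [0, m_1)$ together with $I^2_i = [m_{2i-1}, m_{2i+1})$ for $i \ge 1$; both partition $\omega$. For the first partition, let $J^1_i \subseteq 2^{I^1_i}$ be the set of all $u$ that agree on some $[m_{2i},n)$ with the restriction of some $t \in F_n$, where $n$ ranges over $[m_{2i+1}, m_{2i+2})$; by the estimate above $|J^1_i|\,2^{-|I^1_i|} \le 2^{m_{2i}} \sum_{n \ge m_{2i+1}} v_n < 2^{-(2i+1)}$, so $\sum_i |J^1_i|\,2^{-|I^1_i|} < \infty$. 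Define $J^2_i$ for $i \ge 1$ the same way using levels $n \in [m_{2i}, m_{2i+1})$ inside $I^2_i = [m_{2i-1}, m_{2i+1})$, so that $|J^2_i|\,2^{-|I^2_i|} < 2^{-2i}$, and set $J^2_0 = \{u \in 2^{[0,m_1)} : \exists n < m_1\ u \rest n \in F_n\}$, a single term of finite size. Then $(I^1_i, J^1_i)_{i\in\omega}$ and $(I^2_i, J^2_i)_{i\in\omega}$ are members of $\T^\star$.

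It remains to check that $X$ is covered. Each level $n$ lies in a unique gap $[m_j, m_{j+1})$; if $j = 2i+1$ is odd this gap is among those used to build $J^1_i$ (note $[m_{2i+1},m_{2i+2}) \subseteq I^1_i$ and $m_{2i} = m_{j-1}$), and if $j = 2i$ is even it is among those used for $J^2_i$ (with $J^2_0$ absorbing the case $j=0$). Hence whenever $x \rest n \in F_n$ we get $x \rest I^1_i \in J^1_i$ or $x \rest I^2_i \in J^2_i$ for the appropriate $i$. So if $\exists^\infty n\ x\rest n \in F_n$, then --- since each gap is finite, so finitely many of these levels lie in any one gap --- infinitely many of these levels are handled by the first partition, or infinitely many by the second; in the former case $x \in (I^1_i,J^1_i)_{i\in\omega}$, in the latter $x \in (I^2_i, J^2_i)_{i\in\omega}$. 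Thus $X$ lies in the union of two small$^\star$ sets, as required. I expect the main obstacle to be getting the interleaving exactly right: one must arrange the two partitions so that for \emph{every} $j$ the gap $[m_j, m_{j+1})$ sits inside a block whose left endpoint is $m_{j-1}$ in at least one of the two partitions --- that is what lets the cost $2^{m_{j-1}}\sum_{n \ge m_j} v_n$ be driven to $0$ by the recursive choice of the $m_k$'s, whereas the naive cost $2^{m_j}\sum_{n\ge m_j} v_n$ is beyond control.
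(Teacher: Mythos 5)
Your proposal is correct and is essentially the paper's own proof, just with different notation. The paper also reduces to showing $\N \subseteq \T^\star \oplus \T^\star$, fixes the $F_n$'s, chooses a single interleaved increasing sequence (which it writes as $n_0 < m_0 < n_1 < m_1 < \cdots$ rather than $m_0 < m_1 < m_2 < \cdots$) recursively so that $2^{n_k}\sum_{i \ge m_k}|F_i|/2^i < \varepsilon_k$ with $\sum_k\varepsilon_k < \infty$, uses the two staggered interval partitions $\{[n_k,n_{k+1})\}_k$ and $\{[m_k,m_{k+1})\}_k$, assigns the levels in each gap to the block of the one partition whose left endpoint lies one gap earlier, and covers $X$ by splitting the infinitely many test levels between the two partitions exactly as you do. Your concluding remark identifying the crux --- that each gap $[m_j,m_{j+1})$ must sit inside a block with left endpoint $m_{j-1}$ so the cost $2^{m_{j-1}}\sum_{n\ge m_j}v_n$ is controllable while $2^{m_j}\sum_{n\ge m_j}v_n$ is not --- is precisely the point of the paper's definition of the sequence.
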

\begin{proof}
Since $\N$ is an ideal, $\N \oplus \N=\N$. Consequently, it suffices
to show that $\T^\star\oplus \T^\star= \N$. The 
following theorem gives the required decomposition.

\begin{theorem}[\cite{Bar88Cov}]
Suppose that $X \subseteq 2^\omega$ is a measure zero set. Then there
exist sequences $\<n_k, m_k : k \in \omega\>$ and $\<J_k, J'_k :
k \in \omega\>$ such that 
\begin{enumerate}
\item $\displaystyle n_k < m_k < n_{k+1} $ for all $k \in \omega$,
\item $\displaystyle J_k \subseteq 2^{[n_k, n_{k+1})}$, $J'_k
  \subseteq 2^{[m_k, 
m_{k+1})}$ for $k \in \omega$,
\item the sets $\displaystyle \lft1([n_k,n_{k+1}),J_k\rgt1)_{k \in \omega} $ and
$\displaystyle \lft1([m_k,m_{k+1}),J'_k\rgt1)_{k \in \omega} $ are small$^\star$, and
\item $\displaystyle X \subseteq
  \lft1([n_k,n_{k+1}),J_k\rgt1)_{k \in \omega}  \cup 
\lft1([m_k,m_{k+1}),J'_k\rgt1)_{k \in \omega} $.
\end{enumerate}
In particular, every null set is a union of two small$^\star$
sets.
\end{theorem}
\begin{proof}
Let $X \subseteq 2^{\omega}$ be a null set. 

We can assume
that $X \subseteq \left\{x \in 2^{\omega}: \ \exists^{\infty}n \ x \rest n \in
F_{n}\right\}$ for some sequence $\<F_{n} : n \in \omega\>$ satisfying
conditions of Lemma \ref{basic}.

Fix a sequence of positive reals $\< \varepsilon_{n} : n \in \omega\>$
such that $\sum_{n=1}^{\infty} \varepsilon_{n} < \infty$.

Define two sequences $\<n_{k},m_{k}:k \in \omega\>$ as follows:
$n_{0}=0$,
$$m_{k} = \min\left\{j > n_k :
2^{n_{k}} \cdot \sum_{i=j}^{\infty}
\frac{|F_{i}|}{2^{i}} < \varepsilon_{k}\right\}   ,$$ and
$$n_{k+1} = \min\left\{j > m_k :
2^{m_{k}} \cdot \sum_{i=j}^{\infty}
\frac{|F_{i}|}{2^{i}} < \varepsilon_{k}\right\} \ \hbox{for } k \in
\omega  .$$ 
 
Let
$I_{k}=[n_{k},n_{k+1})$    and  $I'_{k}=[m_{k},m_{k+1})$  for
$k \in \omega$.
Define
\begin{multline*}
s \in  J_{k} \iff   s \in 2^{I_{k}} \ \& \
\exists i \in [m_{k},n_{k+1})\  \exists t \in F_{i} \
s \rest \dom(t) \cap \dom(s)= \\
       t \rest \dom(t) \cap \dom(s)   .
\end{multline*}
Similarly
\begin{multline*}
s \in  J'_{k}\iff   s \in 2^{I'_{k}} \ \& \  \exists i \in
[n_{k+1},m_{k+1})\
\exists t \in F_{i} \  s \rest \dom(t) \cap \dom(s)= \\
       t \rest \dom(t) \cap \dom(s)  . 
\end{multline*}

It remains to show that $(I_{k},J_{k})_{k\in \omega}$ and
$(I'_{k},J'_{k})_{k\in \omega}$ are small sets and
that their union  covers $X$.

Consider the set $(I_{k},J_{k})_{k\in \omega}$. Notice that for $k
\in \omega$ 
$$\frac{|J_{k}|}{2^{I_{k}}} \leq
2^{n_{k}} \cdot \sum^{n_{k+1}}_{i=m_{k}} \frac{|F_{i}|}{2^{i}}
\leq  \varepsilon_{k}.$$
Since $\sum_{n=1}^{\infty} \varepsilon_{n} < \infty$ this shows
that the set $(I_{n},J_{n})_{n\in \omega}$ is null.
An analogous argument shows that $(I'_{k},J'_{k})_{k\in \omega}$ is null.
Finally, we show that
$$X \subseteq (I_{n},J_{n})_{n\in \omega}\cup (I'_{n},J'_{n})_{n\in \omega}
.$$ 
Suppose that $x \in X$ and let
$Z= \left\{n \in \omega: x \rest n \in F_{n}\right\}$. By the choice
of $F_n$'s set $Z$ is infinite. 
Therefore, one of the sets,
$$Z \cap \bigcup_{k\in \omega}[m_{k},n_{k+1}) \quad \text{ or }
Z \cap \bigcup_{k\in \omega} [n_{k+1},m_{k+1}),$$
is infinite.
Without loss of generality we can assume that it is the first set. 
It follows that $x \in  (I_{n},J_{n})_{n\in \omega}$ because if $x \rest n
\in  F_{n}$ and $n \in [m_{k},n_{k+1})$, then by the
definition there is $t \in J_{k}$ such
that $x \rest [n_{k},n_{k+1})=t$.
\end{proof}
\end{proof}

Now lets turn attention to the family of small sets $\T$. Observe that
the representation used in the definition of small sets is not unique.
In particular, it is easy
to see that

\begin{lemma}\label{trivial}
Suppose that
$(I_n,J_n)_{n\in \omega}$ is a small set and $\{a_k: k\in
\omega\}$ is a partition of $\omega$ into finite sets. For $n \in
\omega$ define
$I'_n=\bigcup_{l\in a_n} I_l$ and $J'_n=\{s \in 2^{I'_n}: \exists l
\in a_n \ \exists t \in J_l \ s \rest I_l = t\rest I_l\}$.
Then $(I_n,J_n)_{n \in \omega}=(I'_n,J'_n)_{n \in
  \omega}$.
\end{lemma}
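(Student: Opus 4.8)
The plan is to establish the two inclusions
$(I_n,J_n)_{n\in\omega}\subseteq(I'_n,J'_n)_{n\in\omega}$ and
$(I'_n,J'_n)_{n\in\omega}\subseteq(I_n,J_n)_{n\in\omega}$ separately, each time just by
unwinding the definition of the operator $(\cdot,\cdot)_{n\in\omega}$, i.e. of the set
$\{x\in 2^\omega:\exists^\infty n\ x\rest I_n\in J_n\}$. First I would record the (routine)
observation that $(I'_n,J'_n)_{n\in\omega}$ is again a legitimate small-set representation:
the $I'_n$ are pairwise disjoint finite subsets of $\omega$, being unions of the pairwise
disjoint $I_l$ over the pairwise disjoint blocks $a_n$; and for each $l\in a_n$ and each
$t\in J_l$ the number of $s\in 2^{I'_n}$ with $s\rest I_l=t$ is $2^{|I'_n|-|I_l|}$, so
$|J'_n|/2^{|I'_n|}\le\sum_{l\in a_n}|J_l|/2^{|I_l|}$ and hence
$\sum_n|J'_n|/2^{|I'_n|}\le\sum_l|J_l|/2^{|I_l|}<\infty$.

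For the inclusion $\subseteq$, fix $x$ with $x\rest I_m\in J_m$ for infinitely many $m$.
Each such witness $m$ lies in a unique block $a_{n(m)}$, and since the blocks $a_n$ are
\emph{finite}, the set of values $n(m)$ is infinite. For each $n$ in that set choose a
witness $m\in a_n$ and put $t=x\rest I_m\in J_m$; then $s=x\rest I'_n$ satisfies
$s\rest I_m=t=t\rest I_m$ with $m\in a_n$ and $t\in J_m$, so $s\in J'_n$. Thus
$x\rest I'_n\in J'_n$ for infinitely many $n$, i.e. $x\in(I'_n,J'_n)_{n\in\omega}$.

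For the inclusion $\supseteq$, fix $x$ with $x\rest I'_n\in J'_n$ for infinitely many $n$.
For each such $n$ the definition of $J'_n$ supplies $l\in a_n$ and $t\in J_l$ with
$(x\rest I'_n)\rest I_l=t\rest I_l$; since $I_l\subseteq I'_n$ and $t\in 2^{I_l}$ this says
$x\rest I_l=t\in J_l$. As the blocks $a_n$ are pairwise disjoint, distinct values of $n$
yield distinct values of $l$, so $x\rest I_l\in J_l$ for infinitely many $l$, whence
$x\in(I_n,J_n)_{n\in\omega}$. The only point requiring any care in the whole argument is
this pigeonhole bookkeeping — invoking finiteness of the $a_n$ in one direction and their
pairwise disjointness in the other — to guarantee that "infinitely many witnesses" is
transported faithfully across the reindexing; there is no genuine obstacle beyond that.
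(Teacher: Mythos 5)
Your proof is correct. The paper does not actually supply a proof of this lemma — it is introduced with the phrase ``it is easy to see that'' and left as a routine unwinding of definitions — and your argument fills that gap in exactly the natural way: the union bound $|J'_n|/2^{|I'_n|}\le\sum_{l\in a_n}|J_l|/2^{|I_l|}$ shows $(I'_n,J'_n)_{n\in\omega}$ is a legitimate small set, finiteness of each $a_n$ carries ``infinitely many $m$'' to ``infinitely many $n$'' for the forward inclusion, and pairwise disjointness of the blocks carries it back for the reverse.
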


\begin{lemma}\label{union}
Suppose that $(I_n,J_n)_{n\in \omega}$ and
$(I'_n,J'_n)_{n\in \omega}$ are two small sets.
If $\{I_n: n\in \omega\}$ is a finer partition than $\{I'_n: n\in \omega\}$, then
$(I_n,J_n)_{n\in \omega} \cup
(I'_n,J'_n)_{n\in \omega})$ is a small set.
\end{lemma}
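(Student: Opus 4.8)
The plan is to use Lemma~\ref{trivial} to regroup the first small set so that it is presented with respect to the \emph{same} partition of $\omega$ as the second one, and then to take the union ``coordinatewise''. Since $\{I_n:n\in\omega\}$ is finer than $\{I'_n:n\in\omega\}$, every block $I'_k$ is a finite union of blocks $I_l$; concretely, putting $a_k=\{l:I_l\subseteq I'_k\}$ gives a partition $\{a_k:k\in\omega\}$ of $\omega$ into finite sets with $I'_k=\bigcup_{l\in a_k}I_l$ for each $k$. Applying Lemma~\ref{trivial} to $(I_n,J_n)_{n\in\omega}$ with this partition produces sets $J''_k=\{s\in 2^{I'_k}:\exists l\in a_k\ \exists t\in J_l\ s\rest I_l=t\rest I_l\}$ such that $(I_n,J_n)_{n\in\omega}=(I'_k,J''_k)_{k\in\omega}$. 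Now both small sets are indexed by the common partition $\{I'_k:k\in\omega\}$.

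Next I would verify the identity $(I'_k,J''_k)_{k\in\omega}\cup(I'_k,J'_k)_{k\in\omega}=(I'_k,J''_k\cup J'_k)_{k\in\omega}$. The inclusion ``$\subseteq$'' is immediate, and for ``$\supseteq$'' note that if $x\rest I'_k\in J''_k\cup J'_k$ for infinitely many $k$, then by the pigeonhole principle either $x\rest I'_k\in J''_k$ for infinitely many $k$ or $x\rest I'_k\in J'_k$ for infinitely many $k$, so $x$ lies in one of the two sets. Hence the union is $(I'_k,J''_k\cup J'_k)_{k\in\omega}$, which clearly has the form required of a small set except that the convergence condition must still be checked.

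That convergence estimate is the one real computation, and I expect the (mild) obstacle to be only bookkeeping. For each $l\in a_k$ and each $t\in J_l$ there are exactly $2^{|I'_k|-|I_l|}$ functions in $2^{I'_k}$ extending $t\rest I_l$, so $|J''_k|\le\sum_{l\in a_k}|J_l|\,2^{|I'_k|-|I_l|}$, whence $\sum_k|J''_k|/2^{|I'_k|}\le\sum_k\sum_{l\in a_k}|J_l|/2^{|I_l|}=\sum_l|J_l|/2^{|I_l|}<\infty$, the last equality holding because $\{a_k:k\in\omega\}$ partitions $\omega$. Adding the hypothesis $\sum_k|J'_k|/2^{|I'_k|}<\infty$ gives $\sum_k|J''_k\cup J'_k|/2^{|I'_k|}<\infty$, so $(I'_k,J''_k\cup J'_k)_{k\in\omega}$ is small, completing the argument. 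The only points requiring care are that the regrouping from Lemma~\ref{trivial} keeps the blocks finite and disjoint, and that the double sum above telescopes because the $a_k$'s exhaust $\omega$.
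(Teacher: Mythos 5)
Your proof is correct and takes essentially the same approach as the paper: both regroup the finer family onto the coarser partition $\{I'_k\}$ and then take the union of the witness sets blockwise. The paper writes down the combined $J''_n$ directly and leaves the verification as ``easy to see''; you route the regrouping through Lemma~\ref{trivial} explicitly and then supply the two checks the paper omits (the pigeonhole identity $(I'_k,J''_k)\cup(I'_k,J'_k)=(I'_k,J''_k\cup J'_k)$ and the telescoping summability estimate), which is a welcome filling-in of detail rather than a different argument.
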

\begin{proof}
Define $I''_n=I'_n$  for $n \in \omega$ and let   
$$J''_{n} = J'_{n} \cup \left\{s \in 2^{I'_n} : \exists k \ \exists s
  \in J_k  (
I_k \subseteq I'_n  \ \& \ s \rest I_k \in J_{k})\right\} .$$
It is easy to see that
$(I_n,J_n)_{n\in \omega} \cup
(I_n,J_n)_{n\in \omega})=(I''_n,J''_n)_{n\in \omega} $.
\end{proof}

 Since members of $\T$ do not seem to form an ideal we are interested
 in characterizing instances when a union of two sets in $\T$ is in $\T$.

\begin{theorem}
Suppose that $(I_n,J_n)_{n \in \omega}$ and $(I'_n,J'_n)_{n \in
  \omega}$ are two small sets and $(I_n,J_n)_{n \in \omega}
\subseteq (I'_n,J'_n)_{n \in  \omega}$. Then there exists a set $(I''_n,J''_n)_{n \in
  \omega}$ such that $(I_n,J_n)_{n \in \omega}
\subseteq (I''_n,J''_n)_{n \in  \omega} \subseteq (I'_n,J'_n)_{n \in
  \omega}$ and partition $\{I''_n: n \in \omega\}$ is finer than both
$\{I_n: n \in \omega\}$ and $\{I'_n: n \in \omega\}$. 
\end{theorem}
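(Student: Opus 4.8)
The plan is to let $\{I''_n : n\in\omega\}$ be an enumeration of the common refinement of the two given partitions, namely of the nonempty sets of the form $I_m\cap I'_k$. Each block of this partition lies inside a unique block of $\{I_n\}$ and inside a unique block of $\{I'_n\}$, and the corresponding two indices tend to infinity together with the block; in particular $\{I''_n\}$ is automatically finer than both $\{I_n\}$ and $\{I'_n\}$. So the entire task is to choose $J''_n\subseteq 2^{I''_n}$ with $\sum_n|J''_n|/2^{|I''_n|}<\infty$ and $(I_n,J_n)_n\subseteq(I''_n,J''_n)_n\subseteq(I'_n,J'_n)_n$. The point to keep in mind is that refining a partition can inflate the weight $\sum|J_n|/2^{|I_n|}$ of a fixed family of tests without bound, so the construction of $J''$ must use the hypothesis $(I_n,J_n)_n\subseteq(I'_n,J'_n)_n$ quantitatively rather than as a bare set inclusion.

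First I would translate the hypothesis. Put $A_m=\{x:x\rest I_m\in J_m\}$ and $B_k=\{x:x\rest I'_k\in J'_k\}$; these are clopen sets depending on pairwise disjoint coordinate blocks, with $\sum_m\mu(A_m)<\infty$ and $\sum_k\mu(B_k)<\infty$, and the hypothesis says exactly $\limsup_m A_m\subseteq\limsup_k B_k$. Using compactness of $2^\omega$ together with these summability conditions I would extract a uniform statement roughly of the form: for every $\varepsilon>0$ there is $N$ such that for every $m\ge N$, after discarding from $J_m$ a subset of weight at most $\varepsilon\,|J_m|/2^{|I_m|}$, each remaining test ``$x\rest I_m\in J_m$'' is implied by the disjunction of finitely many of the tests ``$x\rest I'_k\in J'_k$'' with $k\ge N$ and $I'_k$ meeting a fixed finite window about $I_m$. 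Distilling such genuine uniformity from the plain inclusion of two measure-zero $\limsup$ sets is the step I expect to be the main obstacle; I would anticipate a preliminary thinning of the data in the style of the proof of Theorem~\ref{old1}.

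Granting a statement of that kind, I would build $J''$ blockwise against a fixed summable sequence $\langle\varepsilon_m\rangle$. For a common-refinement block $L\subseteq I_m\cap I'_k$ I would let $J''_L$ be the union of a \emph{$J'$-part} and a \emph{$J$-part}: the $J'$-part is a portion of the shadow $\{s\rest L:s\in J'_k\}$, included whenever it fits the $\varepsilon$-budget --- which it does for all but finitely many blocks $L$ inside each fixed $I'_k$, since those shadows re-assemble into $J'_k$ --- and the $J$-part encodes, distributed over the blocks $L\subseteq I_m$, the ``finitely many nearby tests'' description from the previous paragraph, so that any $x$ with $x\rest I_m\in J_m$ (outside the discarded little bit) still passes a $J''$-test on some block inside $I_m$. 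The $\varepsilon$-clause bounds the total weight of the $J$-parts, so $\sum_n|J''_n|/2^{|I''_n|}<\infty$; the inclusion $(I_n,J_n)_n\subseteq(I''_n,J''_n)_n$ is then immediate, and $(I''_n,J''_n)_n\subseteq(I'_n,J'_n)_n$ follows from the $J'$-parts once one checks that a point caught infinitely often by the $J''$-tests is caught infinitely often by the $J'$-tests, again using the ``nearly covered by nearby tests'' structure to pass from a $J''$-test on a sub-block of $I'_k$ to the $J'$-test on $I'_k$ at infinitely many $k$. The remaining estimates are bookkeeping of the same type as in Theorem~\ref{old1}.
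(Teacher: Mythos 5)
Your plan to take the common refinement $\{I_n\cap I'_m\}$ is right, and you correctly flag the weight-inflation danger. But you stall at exactly the point where the paper's argument gets its traction, and the step you yourself mark as the main obstacle --- extracting a uniform quantitative statement from the set inclusion via compactness and summability --- is neither available nor what is needed. The paper's engine is a purely combinatorial characterization of inclusion between small sets (Lemma~\ref{key}): $(I_n,J_n)_{n}\subseteq(I'_n,J'_n)_{n}$ holds iff for all but finitely many $n$ and every $s\in J_n$ there are $m$ and $t\in J'_m$ with $I_n\cap I'_m\neq\emptyset$, $s\rest(I_n\cap I'_m)=t\rest(I_n\cap I'_m)$, and $t\rest(I_n\cap I'_m)^\frown u\in J'_m$ for \emph{every} $u\in 2^{I'_m\setminus I_n}$. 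The nontrivial direction is proved by a direct diagonalization (if the condition fails along an infinite $Z$, assemble a point that lands in $J_n$ for $n\in Z$ but misses every $J'_m$); it is not a measure or compactness argument, and it yields a clean per-$s$ condition rather than the ``$\varepsilon$-of-the-weight can be discarded'' statement you were hoping for.

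With that lemma the construction is far simpler than your two-part $J$-part/$J'$-part scheme. For each nonempty $I_n\cap I'_m$, put $s\in J''_{n,m}$ iff some $t\in J'_m$ agrees with $s$ on $I_n\cap I'_m$ and \emph{all} of $t$'s extensions over $I'_m\setminus I_n$ lie in $J'_m$; equivalently, $s$'s full cylinder inside $2^{I'_m}$ is contained in $J'_m$. Crucially this definition does not mention $J_n$ at all, so there is no ``$J$-part'' to budget --- the component you propose adding is exactly the one whose weight refinement would inflate, and the correct resolution is that it is not needed. Summability then falls out by rewriting $|J''_{n,m}|/2^{|I_n\cap I'_m|}=|J''_{n,m}|\cdot 2^{|I'_m\setminus I_n|}/2^{|I'_m|}$ and comparing the full cylinders to $J'_m$, giving a bound by $\sum_m|J'_m|/2^{|I'_m|}<\infty$ with no auxiliary $\varepsilon_m$'s. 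The inclusion $(I''_n,J''_n)_n\subseteq(I'_n,J'_n)_n$ is immediate from the full-cylinder condition, and $(I_n,J_n)_n\subseteq(I''_n,J''_n)_n$ is precisely what Lemma~\ref{key} delivers. As written, your proposal leaves its central uniformity claim unestablished and the construction conditioned on it, so the argument does not close.
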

\begin{proof}
Let start with the following:
\begin{lemma}\label{key}
Suppose that $(I_n,J_n)_{n \in \omega}$ and $(I'_n,J'_n)_{n \in
  \omega}$ are two small sets.
The following conditions are equivalent:
\begin{enumerate}
\item $(I_n,J_n)_{n \in \omega} \subseteq (I'_n,J'_n)_{n \in
  \omega}$,
\item for all but finitely many $n\in \omega$ and for every $s \in J_n$ there
  exists $m \in \omega$ and $t \in J'_m$ such that
\begin{enumerate}
\item $I_n \cap I'_m \neq \emptyset$,
\item $s \rest (I_n \cap I'_m ) = t \rest (I_n \cap I'_m)$,
\item $\forall u \in 2^{I'_m \setminus I_n} \ t \rest (I_n \cap I'_m) ^\frown u \in J'_m$.
\end{enumerate}
\end{enumerate}
\end{lemma}
\begin{proof}
$(2) \rightarrow (1)$ Suppose that $x \in (I_n,J_n)_{n \in \omega}$. Then
for infinitely many $n$, $x \rest I_n \in J_n$.  For all but finitely
many of those $n's$, conditions (b) and (c) of clause (2) guarantee that for some
$m$ such that $I_n \cap I'_m\neq \emptyset$, $x \rest
(I_n \cap I'_m )^\frown x \rest (I'_m \setminus I_n) \in
J'_m$. Consequently, $x \in (I'_n,J'_n)_{n \in
  \omega}$.

$\neg (2) \rightarrow \neg (1)$ Suppose that condition (2) fails. Then there exists an
infinite set $Z \subseteq \omega$ such that for each $n \in Z$ there is
$s_n \in  J_n$ such that for every $m$ such that $I_n \cap I'_m
\neq \emptyset$ exactly one of the following conditions holds:
\begin{enumerate}
\item $s_n \rest (I_n \cap I'_m ) \neq t \rest (I_n \cap I'_m)$ for every
  $t \in J'_m$,
\item there is $t \in J'_m$ such that $s_n \rest (I_n \cap I'_m ) = t
  \rest (I_n \cap I'_m)$ but for some $u=u_{n,m} \in 2^{I'_m\setminus
    I_n}$, $t \rest (I_n \cap I'_m )^\frown u_{n,m} \not\in J'_m$.
\end{enumerate}
By thinning out the set $Z$ we can assume that no set $I'_m$
intersects two distinct sets $I_n$ for $n \in Z$. Also for each $m \in
\omega$ fix $t^m \in 2^{I'_m}$ such that $t^m \not\in J'_m$.

Let $x \in 2^\omega$ be defined as follows:

$$x(l)=\left\{\begin{array}{ll}
s_n(l) & n \in Z \text{ and } l \in I_n \text{ and } u_{n,m} \text{ is
         not defined}\\
0 & \text{if $n \in Z$ and $l \in I'_m \setminus I_n$ and $I_n\cap I_m\neq
    \emptyset$ and $u_{n,m}$ is not defined}\\
s_n(l) & \text{if $n \in Z$ and $l \in I_n\cap I'_m$ and $u_{n,m}$ is defined}\\
u_{n,m}(l) & \text{if $n \in Z$ and $l \in I'_m\setminus I_n$ and and $I_n\cap I_m\neq
    \emptyset$ and $u_{n,m}$ is
             defined} \\
t^m(l) & \text{if } l\in I_m \text{ and } I_m \cap I_n= \emptyset 
         \text{ for all } n \in Z\end{array}\right. .
$$

Observe that the first two clauses define $x \rest I'_m$ when $I'_m
\cap I_n\neq \emptyset$ for some $n\in Z$ and $u_{n,m}$ is undefined,
the next two clauses define $x \rest I'_m$ when $I'_m
\cap I_n\neq \emptyset$ for some $n\in Z$ and $u_{n,m}$ is defined,
and finally the last clause defines $x \rest I'_m$ when $I'_m \cap
I_n=\emptyset$ for all $n \in Z$.
It is easy to see that these cases are mutually exclusive and that 
$x \in (I_n,J_n)_{n \in \omega}$ since $x \rest I_n =s_n \in J_n$ for $n \in
Z$. Finally note that $x \not\in (I'_n,J'_n)_{n \in
  \omega}$ since by the choice of $u_{n,m}$ (or property of $s_n$) $x \rest I'_m \not \in J'_m$ for all $m$.
\end{proof}

Suppose that $(I_n,J_n)_{n \in \omega}$ and $(I'_n,J'_n)_{n \in
  \omega}$ are two small sets and $(I_n,J_n)_{n \in \omega}
\subseteq (I'_n,J'_n)_{n \in  \omega}$.
Consider the partition consisting of sets
$\{I_n \cap I'_m: n,m\in \omega\}$. 
For each non-empty set $I_n \cap I'_m$ we define $J''_{n,m} \subseteq
2^{I_n \cap I'm}$ as follows: 

$s\in J''_{n,m}$ if there is $t \in J'_m$ such that $s \rest (I_n \cap
I'_m) = t \rest (I_n \cap
I'_m) $ and for all
 $u \in 2^{I'_m \setminus I_n} \ t \rest (I_n \cap I'_m)
  ^\frown u \in J'_m$.

Observe that the definition of $J''_{n,m}$ does not depend on
$J_n$.  

Note that
$$\sum_{m,n\in
  \omega, I_n\cap I'_m\neq \emptyset} \dfrac{|J''_{m,n}|}{2^{|I_n\cap
  I'_m|} }=\sum_{m \in \omega}\sum_{n\in \omega, I_n\cap I'_m\neq
\emptyset} \dfrac{|J''_{m,n}|}{2^{|I_n\cap
  I'_m|} } = $$
$$\sum_{m \in \omega}\sum_{n\in \omega, I_n\cap I'_m\neq
\emptyset} \dfrac{|J''_{n,m}|\cdot
    2^{|I'_m\setminus  I_n|}}{2^{|I''_k|} \cdot 2^{|I'_m\setminus
      I_n|}} \leq \sum_{m \in \omega} \dfrac{|J'_m|}{2^{|I'_m|}}<\infty.$$

To finish the proof observe that for $x \in 2^\omega$, whenever $x \rest (I_n\cap I'_m) \in
J''_{n,m} $ then $x \rest I'_m \in J'_m$. 
Similarly, if $x \rest I_n \in J_n$ then by Lemma \ref{key} there is
$m$ such that $x \rest (I_n \cap I'_m) \in J''_{m,n}$
It follows that
$(I_n,J_n)_{n \in \omega} \subseteq (I_{n,m},J''_{n,m})_{n,m\in \omega} \subseteq (I'_m,J'_m)_{m \in
  \omega}$.
\end{proof}

\section{Small sets versus measure zero sets}
In this section we will prove the main result.

\begin{theorem}\label{main2}
There exists a null set which is not small, that is $\T \subsetneq \N$.
\end{theorem}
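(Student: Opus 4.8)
The plan is to build a null set $X$ by a diagonalization (or genericity) argument that defeats every possible small-set representation. Since every candidate small set is coded by a sequence $(I_n,J_n)_{n\in\omega}$ with $\sum_n |J_n|/2^{|I_n|}<\infty$, and there are only continuum-many such codes, I would not be able to diagonalize over all of them pointwise; instead the idea is to find a \emph{single} null set $X$ — in fact a well-structured one, e.g.\ the set of branches through a suitable subtree, or a set of the form $\{x:\forall^\infty n\ x\rest[k_n,k_{n+1})\in H_n\}$ for cleverly chosen large $H_n\subseteq 2^{[k_n,k_{n+1})}$ — and show by a counting/measure argument that it cannot be covered by any $(I_n,J_n)_{n\in\omega}$. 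The point of using intervals $[k_n,k_{n+1})$ with $H_n$ taking up, say, half of $2^{[k_n,k_{n+1})}$ is that $X$ then has positive ``conditional density'' on every block, so it is far from being captured by a small set whose blocks $J_n$ shrink summably, yet by choosing the blocks to grow fast enough (e.g.\ $|k_{n+1}-k_n|$ increasing) one still gets $\mu(X)=0$ because a real must eventually lie in $H_n$ for \emph{all} large $n$, an event of measure $\prod_{n\geq m}\mu(H_n)=0$.

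The key steps, in order: (1) Fix a fast-growing sequence $k_0<k_1<\cdots$ and sets $H_n\subseteq 2^{[k_n,k_{n+1})}$ with $|H_n|/2^{k_{n+1}-k_n}$ bounded away from $0$ and from $1$ (the exact rates to be pinned down in step (4)); set $X=\{x:\forall^\infty n\ x\rest[k_n,k_{n+1})\in H_n\}$ and check $X\in\N$ via Lemma \ref{basic}. (2) Suppose toward a contradiction that $X\subseteq(I_n,J_n)_{n\in\omega}$ for some small-set code; using Lemma \ref{trivial} replace $\{I_n\}$ by the coarser common refinement with the interval partition $\{[k_n,k_{n+1})\}$ — i.e.\ group the $I_n$'s so each new block is a union of old ones and is contained in (or equals) a single $[k_\ell,k_{\ell+1})$, or conversely, so that WLOG the $I_n$'s refine, or are refined by, the given intervals. (3) Translate ``$X\subseteq(I_n,J_n)$'' into: for every $x\in X$, infinitely often $x\rest I_n\in J_n$; since membership in $X$ only constrains $x$ on the tail blocks $[k_\ell,k_{\ell+1})$ through $H_\ell$, and the $J_n$ are summably small, derive that for large $\ell$ almost no element of $H_\ell$ — in the normalized counting measure on $2^{[k_\ell,k_{\ell+1})}$ — can be extended to a real hitting $J_n$ infinitely often. (4) Make this precise with a Borel–Cantelli / independence computation on the product measure built from the $H_\ell$'s: because the blocks are independent and $\sum|J_n|/2^{|I_n|}<\infty$, the set of $x\in X$ that hit $J_n$ infinitely often has \emph{relative} measure zero in $X$ (w.r.t.\ the natural probability measure $\prod_\ell \mathrm{unif}(H_\ell)$), so in particular it is a proper subset of $X$, contradicting $X\subseteq(I_n,J_n)_{n\in\omega}$.

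The main obstacle is step (3)–(4): the difficulty is that a small-set block $I_n$ need not respect the partition into the $[k_\ell,k_{\ell+1})$ — a single $I_n$ may straddle the boundary between two of my blocks, or conversely one of my blocks may be chopped across many tiny $I_n$'s. Handling the straddling requires the density argument to survive ``marginalization'': I must show that a summably-small family of tests $J_n$ on arbitrarily-placed finite sets $I_n$ still has relative measure zero inside $X$, which means choosing the growth rate of $k_{n+1}-k_n$ fast enough that any $I_n$ is either essentially contained in one block or negligibly spread, and choosing $|H_n|/2^{|[k_n,k_{n+1})|}$ (tending to $1$ slowly, say like $1-2^{-n}$, to keep $X$ ``fat'' on tails while still $\mu(X)=0$) so that the conditional-measure estimate $\mu\big(\{x\in X: x\rest I_n\in J_n\}\big)\lesssim C\cdot |J_n|/2^{|I_n|}$ holds with a uniform constant $C$; then Borel–Cantelli finishes. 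Getting the constant uniform across all placements of $I_n$ relative to the block structure is the technical heart of the proof.
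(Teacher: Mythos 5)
Your proposed set $X=\{x:\forall^\infty n\ x\rest[k_n,k_{n+1})\in H_n\}$ cannot work, for a structural reason that does not depend on how you tune the parameters: $X$ is a countable union of closed measure zero sets, namely $X=\bigcup_m\bigcap_{n\geq m}\{x: x\rest[k_n,k_{n+1})\in H_n\}$, and each inner intersection is closed. As recalled in the introduction of the paper (item (2), citing \cite{BJbook}), every set contained in a countable union of closed null sets is already small$^\star$, hence small. The same objection applies to your alternative suggestion of ``the set of branches through a suitable subtree,'' which is closed. So any set defined by a ``for all but finitely many blocks'' condition, or more generally any $\sigma$-compact null set, is in $\T^\star$ and cannot witness $\T\subsetneq\N$. (There is also a separate parameter inconsistency: taking $|H_n|/2^{|[k_n,k_{n+1})|}=1-2^{-n}$ gives $\prod_n\mu(H_n)>0$, so $X$ would have positive measure; but this is secondary.)

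This structural flaw is exactly why the estimate you identify as the ``technical heart'' in step (4) fails: the conditional probability $\mu_X(\{x\in X: x\rest I_n\in J_n\})$ is \emph{not} uniformly comparable to $|J_n|/2^{|I_n|}$. Indeed, since $X$ is small, there genuinely exists a cover $(I_n,J_n)_{n\in\omega}$ of $X$ with $\sum_n|J_n|/2^{|I_n|}<\infty$; for that cover the conditional probabilities must be close to $1$ for infinitely many $n$, while $|J_n|/2^{|I_n|}\to 0$. A concrete instance: with $H_n=\{0^{|[k_n,k_{n+1})|}\}$ the conditional probability is identically $1$ while $|J_n|/2^{|I_n|}=2^{-|I_n|}$. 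So the constant $C$ you need is unbounded, and Borel--Cantelli gives nothing.

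The paper's proof is organized around the opposite kind of set: a \emph{limsup} set, built as a union of two small sets. It uses two interleaved interval partitions $I^0_n=[n(n+1),(n+1)(n+2))$ and $I^1_n=[n^2,(n+1)^2)$ (so each interval of one system cuts each overlapping interval of the other roughly in half), together with a probabilistic lemma (Lemma~\ref{prob}) producing test sets $J^i_n$ of density $1/n^2$ with the rigidity property that they meet every product $B_0\times B_1$ of two dense-enough sets over any roughly balanced split of the interval. Given any putative small-set cover $(I^2_n,J^2_n)$ of $(I^0_n,J^0_n)_{n}\cup(I^1_n,J^1_n)_{n}$, the interleaving forces some $I^2_n$ to split some $I^i_m$ roughly in half (after regrouping via Lemma~\ref{trivial}), and the rigidity property then produces a real hitting $J^i_m$ infinitely often but missing every $J^2_n$, a contradiction. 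The essential feature — that the counterexample has limsup rather than $\sigma$-compact structure, and that its test sets are engineered to defeat any single interval system — is missing from your approach.
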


\begin {proof}
We will use the following:
\begin{lemma}\label{prob}
For every $\varepsilon >0$ and sufficiently large $n \in \omega$ there
exists a set $A \subset 2^n$ such that $\dfrac{|A|}{2^n} <
\varepsilon$ and for every $u \subset n$ such that $\dfrac{n}{4}\leq
|u| \leq \dfrac{3n}{4}$, and $B_0 \subset 2^u$ and $B_1 \subset 2^{n
  \setminus u}$ such that $\dfrac{|B_0|}{2^{|u|}} \geq \dfrac{1}{2} $ and
$\dfrac{|B_1|}{2^{|n\setminus u|}} \geq \dfrac{1}{2} $ we have $(B_0\times B_1)
\cap A \neq \emptyset$.
\end{lemma}
\begin{proof}
The key case is when $\varepsilon$ is very small and sets $B_0, B_1$
have relative measure approximately $\dfrac{1}{2}$. In such case
$B_0\times B_2$ has relative measure $\dfrac{1}{4}$ yet it intersects
$A$.
Fix large $n \in \omega$ and choose $A \subset 2^n$ randomly. That is,
for each $s \in 2^n$, the probability $\Prob(s \in A)=\varepsilon$ and for $s, s' \in
2^n$, events $s \in A$ and $s' \in A$ are independent. It is well
known that for a large enough $n$ the set constructed this way will
have measure $\varepsilon$ (with negligible error).

Fix $n/4 \leq |u| \leq 3n/4$ and let 
$${\mathcal B}_u=\left\{(B_0,B_1): B_0 \subset 2^u, \ B_1 \subset
2^{n\setminus u} \text{ and }\dfrac{|B_0|}{2^{|u|}},
\dfrac{|B_1|}{2^{|n\setminus u|}}\geq \dfrac{1}{2}\right\}.$$
Note that $|{\mathcal B}_u| \leq 2^{{2^{|u|}+ 2^{|n\setminus u|}}} \leq
2^{2^{\frac{3n}{4}+1}}.$

For $(B_0,B_1) \in {\mathcal B}_u$, $\Prob((B_0\times B_1) \cap
A=\emptyset)=(1-\varepsilon)^{|B_0\times B_1|}\leq (1-\varepsilon)^{2^{n-2}}$.
Consequently, 
$$\Prob(\exists (B_0,B_1) \in {\mathcal B}_u \ (B_0\times
B_1) \cap A =\emptyset) \leq |{\mathcal B}_u|
(1-\varepsilon)^{2^{{n-2}}} \leq
2^{2^{\frac{3n}{4}+1}}(1-\varepsilon)^{2^{{n-2}}}.$$
Finally, since we have at most $2^n$ possible sets $u$,
\begin{multline*}\Prob(\exists u\ \exists  (B_0,B_1) \in {\mathcal B}_u \ (B_0\times
B_1) \cap A =\emptyset) \leq \\
2^n |{\mathcal B}_u|
(1-\varepsilon)^{2^{{n-2}}} \leq
2^{2^{\frac{3n}{4}}+n+1}(1-\varepsilon)^{2^{{n-2}}} \leq
  2^{2^{\frac{7n}{8}}}(1-\varepsilon)^{2^{{n-2}}} \leq \\
2^{2^{\frac{7n}{8}}} (1-\varepsilon)^{\frac{1}{\varepsilon}\epsilon{2^{n-2}} }\leq
\frac{2^{2^{\frac{7n}{8}}} }{2^{\varepsilon{2^{n-2}}}} \longrightarrow 0 \text{ as } n \rightarrow \infty .
\end{multline*}
Therefore there is a non-zero  probability that a randomly chosen set
$A$ has the required properties. In particular, such a set must exist.

\end{proof}

Let $\{k^0_n,k^1_n: n \in \omega\} $ be two sequences defined as 
$k^0_{n}=n(n+1)$ and 
$k^1_{n}=n^2$ for $n >0$.

Let $I^0_n=[k^0_n,k^0_{n+1})$ and $I^1_n=[k^1_n,k^1_{n+1})$ for $n \in
\omega$.
Observe that the sequences are selected such that 
\begin{enumerate}
\item $|I^0_n|=2n+2$ and $|I^1_n|=2n+1$ for $n \in \omega$,
\item $I^0_n \subset I^1_{n} \cup I^1_{n+1}$ for $n > 0$,
\item $I^1_n \subset I^0_{n-1} \cup I^0_{n}$ for $n>1$,
\item $|I^0_n \cap I^1_{n}|=|I^1_n \cap I^0_{n-1}|=n$ for $n>1$,
\item $|I^0_n \cap I^1_{n+1}|=|I^1_n \cap I^0_{n}|=n+1$
  for $n >1$.
\end{enumerate}

Finally, for $n >0$ let $J^0_n \subset 2^{I^0_n}$ and $J^1_n
\subset 2^{I^1_n}$ be selected as in Lemma \ref{prob} for
$\varepsilon_n=\frac{1}{n^2}$.  Easy calculation shows  that for $n \geq 140$ the sets
$J^0_n$ and $J^1_n$ are defined and have the required properties.

Suppose that $(I^0_n,J^0_n)_{n\in \omega} \cup  (I^1_n,J^1_n)_{n\in
  \omega}\subset (I^2_n,J^2_n)_{n\in \omega} $.

{\sc Case 1}
There exists $i \in \{0,1\}$ and infinitely many $n,m \in \omega$ such
that 
$$\frac{|I^i_m|}{4} \leq |I^i_m \cap I^2_n| \leq
\frac{3|I^i_m|}{4}.$$

Without loss of generality $i=0$. Let $\{a_k: k\in
\omega\}$ be a partition of $\omega$ into finite sets. For $n \in
\omega$ define
$I'_n=\bigcup_{l\in a_n} I^2_l$ and $J'_n=\{s \in 2^{I'_n}: \exists l
\in a_n \ \exists t \in J^2_l \ s \rest I^2_l = t\rest I^2_l\}$. By Lemma
\ref{trivial}, we know that $(I'_n, J'_n)_{n \in
  \omega}=(I^2_n,J^2_n)_{n\in \omega}$ no matter what is the choice of
the partition $\{a_k: k\in
\omega\}$. 

Consequently, let us choose $\{a_k: k\in
\omega\}$ and an infinite set $Z \subseteq \omega$ such that 
\begin{enumerate}
\item for every
$m \in Z$ there is $n \in \omega$ such that $\dfrac{|I^0_m|}{4} \leq |I^0_m \cap I'_n| \leq
\dfrac{3|I^0_m|}{4}.$
\item for every $m \in Z$  there exists $n \in \omega$ such that
$I^0_m \subset I'_n \cup I'_{n+1}$,
\item for every $n \in \omega$ there is at most one $m \in Z$ such
  that $I^0_m \cap I'_n \neq \emptyset$.
\end{enumerate}

To construct the required partition $\{a_k: k\in \omega\}$ we inductively glue together sets $I^2_l$ as
follows: suppose that $m$ is such that there is $n$ such
that 
$\dfrac{|I^0_m|}{4} \leq |I^0_m \cap I^2_n|\leq 
\dfrac{3|I^0_m|}{4}.$ Then we define $a_n=\{n\}$ and $a_{n+1} =\{u:
I^0_m \cap I^2_u \neq \emptyset  \text{ and }u\neq
n\}$. Let $Z$ be the subset of the collection of $m$'s selected as
above that is thin enough to satisfy condition (3).

Recall that  $(I^0_n,J^0_n)_{n \in \omega} \subseteq (I^2_n,J^2_n) _{n \in
  \omega}=(I'_n,J'_n) _{n \in \omega}$.

Working towards contradiction fix $m \in Z$, and let $I^0_m \subseteq
I'_n \cup I'_{n+1}$ (in this case $I'_n=I^2_n$). By Lemma 
\ref{key} it follows that if $m$ is large enough then for every $s \in J^0_m$ either 
\begin{enumerate}
\item for every $u \in 2^{I'_n \setminus I^0_m}$ we have $ s \rest (I^0_m \cap
  I'_n )\ ^\frown u \in J'_n$, or 
\item for every $u \in 2^{I'_{n+1} \setminus I^0_m}$ we have $ s \rest (I^0_m \cap
  I'_{n+1} )\ ^\frown u \in J'_{n+1}$.
\end{enumerate}

Let $J''_n= \{s \in 2^{ I^0_m \cap I'_n}:  \forall u \in 2^{I'_n
  \setminus I^0_m} \ s  ^\frown u \in J'_n\}$ and 
 $J''_{n+1}= \{s \in 2^{I^0_m \cap I'_{n+1}}: \forall u \in
 2^{I'_{n+1} \setminus I^0_m} \ s   ^\frown u \in J'_{n+1}\}$.

Clearly $\dfrac{|J''_n|}{2^{|I'_n\cap I^0_m|}}\leq
\dfrac{|J'_n|}{2^{|I'_n|}} \leq \dfrac{1}{2}$ and $\dfrac{|J''_{n+1}|}{2^{|I'_{n+1}\cap I^0_m|}}\leq
\dfrac{|J'_{n+1}|}{2^{|I'_{n+1}|}} \leq \dfrac{1}{2}$.

Let
$B_n= 2^{I^0_m \cap I'_n} \setminus J'_n$ and 
$B_{n+1}=2^{I^0_m \cap I'_{n+1}} \setminus J'_{n+1}$.

It follows that 
$\dfrac{|B_n|}{2^{|I^0_m \cap I'_n|}},  \dfrac{|B_{n+1}|}{2^{|I^0_m \cap
    I'_{n+1}|}} \geq \dfrac{1}{2}$. 
By Lemma \ref{prob} and the definition of set $(I^0_m,J^0_m)_{m \in
  \omega}$ there is $s_m
\in (B_n \times B_{n+1}) \cap J^0_m$. Consequently  
 there is $t_m \in 2^{I'_n \cup I'_{n+1}}$ such that $t_m \rest I^0_m =s_m
 \in J^m_0$ but 
$t_m \rest I'_n \not \in J'_n$ and $t_m 
\rest I'_{n+1} \not \in J'_{n+1}$. 
For each $n \in \omega$ choose $r_n \in 2^{I'_n} \setminus
J'_n$. Define $x \in 2^\omega$ as
$$x \rest I'_n=\left\{\begin{array}{ll}
t_m \rest I'_n & \text{if } I^0_m \cap I'_n \neq \emptyset\\
r_n & \text{if } I^0_m\cap I'_n= \emptyset \text{ for all } m \in
      Z
\end{array} \right. . $$
It follows that $x \in (I^0_n,J^0_n)_{n \in \omega}$ but $x \not \in
(I'_n,J'_n) _{n \in \omega} = (I^2_n,J^2_n) _{n \in
  \omega}$, contradiction.

\bigskip 

{\sc Case 2} For every $i \in \{0,1\}$, almost every $n \in
\omega$ and every $m \in \omega$,
$$|I^2_n \cap I^i_m| \leq \dfrac{|I^i_m|}{4}.$$

This is quite similar to the previous case. 

We inductively  choose $\{a_k: k\in
\omega\}$ and define $I'_n$'s and $J'_n$'s as before. 
Next construct an infinite set $Z \subseteq \omega$ such that 
\begin{enumerate}
\item for every $m \in Z$  there exists $n \in \omega$ such that
$I^0_m \subset I'_n \cup I'_{n+1}$ and $\dfrac{|I^0_m|}{4} \leq |I^0_m
\cap I'_n|, \ |I^0_m \cap I'_{n+1}|\leq
\dfrac{3|I^0_m|}{4}.$
\item for every $n \in \omega$ there is at most one $m \in Z$ such
  that $I^0_m \cap I'_n \neq \emptyset$.
\end{enumerate}
Since $|I^2_k \cap I^i_m| \leq \dfrac{|I^i_m|}{4}$ for each $k,m$ we
can get (1) by careful splitting $\{k: I^0_m \cap I^2_k \neq
  \emptyset\}$ into two sets.

The rest of the proof is exactly as before.

\bigskip 

To conclude the proof it suffices to show that these two cases exhaust
all possibilities. To this end we check  that if for some
$i\in \{0,1\}$, $m,n\in \omega$, $|I^2_n  \cap I^i_m| >
\dfrac{3|I^1_m|}{4}$ then for some $j \in \{0,1\}$ and $k \in \omega$,
$$\dfrac{3|I^j_k|}{4} \leq |I^2_n  \cap I^j_k| \leq  
\dfrac{3|I^j_k|}{4}.$$
This will show that potential remaining cases are already included in  the {\sc Case 1}.

Fix $i=0$ and $n \in \omega$ (the case $i=1$ is analogous.)

By the choice of intervals $I^0_m$ and $I^1_m$, it
follows that if $|I^2_n  \cap I^0_m| >
\dfrac{3|I^0_m|}{4}$  then $|I^2_n  \cap I^1_m| >
\dfrac{|I^1_m|}{4}$. If $|I^2_n  \cap I^1_m| \leq
\dfrac{3|I^1_m|}{4}$ then we are in {\sc Case 1}. Otherwise $|I^2_n  \cap I^1_m| >
\dfrac{3|I^1_m|}{4}$ and so $|I^2_n  \cap I^0_{m+1}| >
\dfrac{|I^1_{m+1}|}{4}$. Continue inductively until the construction
terminates after finitely many steps settling on $j$ and $k$.

\end{proof}

\begin{theorem}
Not every small set is small$^\star$, that is $\T^\star \subsetneq \T$.
\end{theorem}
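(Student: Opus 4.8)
The plan is to re-run the argument of Theorem~\ref{main2}, but with the two half-offset interval families folded into a single \emph{non-interval} partition, producing a set that is small yet demonstrably outside $\T^\star$. Split $\omega=E\sqcup O$ into the even and odd integers. Keep the intervals $I^0_n=[n(n+1),(n+1)(n+2))$, $I^1_n=[n^2,(n+1)^2)$ and the sets $J^0_n\subseteq 2^{I^0_n}$, $J^1_n\subseteq 2^{I^1_n}$ supplied by Lemma~\ref{prob} for $\varepsilon_n=1/n^2$, exactly as in Theorem~\ref{main2}. Transport the $I^0$-data onto the odd coordinates and the $I^1$-data onto the even coordinates: put $B_n=\{2i+1:i\in I^0_n\}\subseteq O$, $A_n=\{2i:i\in I^1_n\}\subseteq E$, and let $\widetilde J^0_n\subseteq 2^{B_n}$, $\widetilde J^1_n\subseteq 2^{A_n}$ be the evident copies of $J^0_n$, $J^1_n$. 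Enumerate $\{(I_k,J_k):k\in\omega\}$ so that $(I_{2n},J_{2n})=(B_n,\widetilde J^0_n)$ and $(I_{2n+1},J_{2n+1})=(A_n,\widetilde J^1_n)$. Then $\{I_k:k\in\omega\}$ partitions $\omega$ into finite sets (the $B_n$ exhaust $O$, the $A_n$ exhaust $E$), and, discarding the finitely many $n$ for which Lemma~\ref{prob} does not yet apply, $\sum_k|J_k|/2^{|I_k|}=\sum_n|J^0_n|/2^{|I^0_n|}+\sum_n|J^1_n|/2^{|I^1_n|}\le 2\sum_n 1/n^2+(\text{finite})<\infty$; so $X:=(I_k,J_k)_{k\in\omega}\in\T$. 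Every $I_k$ of size $\ge 2$ is a run of consecutive even, or consecutive odd, integers, hence not an interval of $\omega$, so this representation is genuinely not a small$^\star$ one — the task is to show $X\notin\T^\star$ outright.

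Suppose toward a contradiction that $X\subseteq Y:=(I'_m,J'_m)_{m\in\omega}$ with $\{I'_m\}$ a partition of $\omega$ into consecutive intervals and $\sum_m|J'_m|/2^{|I'_m|}<\infty$. For $I'_m=[c_m,c_{m+1})$ one computes $B_n\cap I'_m=\{2i+1:i\in I^0_n\cap[\lfloor c_m/2\rfloor,\lfloor c_{m+1}/2\rfloor)\}$ and $A_n\cap I'_m=\{2i:i\in I^1_n\cap[\lceil c_m/2\rceil,\lceil c_{m+1}/2\rceil)\}$. Thus, read on the odd coordinates, $\{I'_m\}$ acts on the blocks $B_n$ \emph{exactly} as the base-level interval partition $\mathcal P$ with endpoints $\{\lfloor c_m/2\rfloor\}$ acts on the intervals $I^0_n$, while, read on the even coordinates, it acts on the $A_n$ exactly as the interval partition $\mathcal P'$ with endpoints $\{\lceil c_m/2\rceil\}$ acts on $I^1_n$; and $\mathcal P$, $\mathcal P'$ differ only by a $\pm1$ shift of their endpoints. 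Since $\{I^0_n\}$ and $\{I^1_n\}$ are half-offset, no single interval partition of $\omega$ can be simultaneously ``aligned'' with the $B_n$ and with the $A_n$. Running the three-case analysis from the proof of Theorem~\ref{main2} (the $\pm1$ slack being harmless because the block lengths tend to infinity, or because the probabilistic estimate in Lemma~\ref{prob} survives replacing $1/4,3/4$ by any $\delta<1/2<\delta'<1$), and coarsening $\{I'_m\}$ via Lemma~\ref{trivial}, we obtain an infinite $Z\subseteq\omega$ for which — without loss of generality on the $I^0$-side — each $B_n$ with $n\in Z$ satisfies $B_n\subseteq I'_{j(n)}\cup I'_{j(n)+1}$ with $|B_n\cap I'_{j(n)}|$ and $|B_n\cap I'_{j(n)+1}|$ both between $1/4$ and $3/4$ of $|B_n|$, the pairs $\{I'_{j(n)},I'_{j(n)+1}\}$ pairwise disjoint and $j(n)\to\infty$.

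From here the argument is a transcription of the end of the proof of Theorem~\ref{main2}. For $n\in Z$ set $J''_{j(n)}=\{r\in 2^{B_n\cap I'_{j(n)}}:\forall u\in 2^{I'_{j(n)}\setminus B_n}\ r^\frown u\in J'_{j(n)}\}$ and define $J''_{j(n)+1}$ analogously; summability of $Y$ gives $|J''_{j(n)}|/2^{|B_n\cap I'_{j(n)}|}\le|J'_{j(n)}|/2^{|I'_{j(n)}|}\le 1/2$ for $n$ large, and likewise at $j(n)+1$, so the complements $C_n=2^{B_n\cap I'_{j(n)}}\setminus J''_{j(n)}$ and $C'_n=2^{B_n\cap I'_{j(n)+1}}\setminus J''_{j(n)+1}$ have relative measure $\ge 1/2$. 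Since $J_{2n}$ is the copy of the set $J^0_n$ from Lemma~\ref{prob} and $B_n$ is split into two parts of fraction in $[1/4,3/4]$, Lemma~\ref{prob} yields $s_n\in(C_n\times C'_n)\cap J_{2n}$; as $s_n\rest(B_n\cap I'_{j(n)})\notin J''_{j(n)}$ we may pick $u^{(n)}\in 2^{I'_{j(n)}\setminus B_n}$ with $(s_n\rest(B_n\cap I'_{j(n)}))^\frown u^{(n)}\notin J'_{j(n)}$, and similarly $v^{(n)}$ spoiling $J'_{j(n)+1}$. Choosing $r_l\in 2^{I'_l}\setminus J'_l$ for every other index $l$ (possible for all but finitely many $l$, else $\sum_l|J'_l|/2^{|I'_l|}=\infty$) and defining $x\in 2^\omega$ by $x\rest I'_{j(n)}=(s_n\rest(B_n\cap I'_{j(n)}))^\frown u^{(n)}$, $x\rest I'_{j(n)+1}=(s_n\rest(B_n\cap I'_{j(n)+1}))^\frown v^{(n)}$ for $n\in Z$, and $x\rest I'_l=r_l$ otherwise (consistent, as the pairs are disjoint), we get $x\rest I_{2n}=x\rest B_n=s_n\in J_{2n}$ for all $n\in Z$, hence $x\in X$, while $x\rest I'_l\notin J'_l$ for all but finitely many $l$, hence $x\notin Y$ — contradicting $X\subseteq Y$. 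So $X\in\T\setminus\T^\star$, i.e.\ $\T^\star\subsetneq\T$.

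The crux is the reduction in the second paragraph: one must check that an interval partition of the interleaved space $E\sqcup O$ genuinely forces the base-level ``cannot be aligned with both $\{I^0_n\}$ and $\{I^1_n\}$'' phenomenon, that coarsening on the interleaved side translates coherently into coarsening the induced base partition, and that the off-by-one discrepancy between the even- and odd-side projections is absorbed without damage. Once that bookkeeping is in place, the heart of the matter is already contained in the proof of Theorem~\ref{main2}, which supplies both the slicing dichotomy and, via Lemma~\ref{prob} and Lemma~\ref{key}, the construction of the offending real.
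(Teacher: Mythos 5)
Your proof is correct and follows essentially the same route as the paper's: interleave the two half‑offset families $\{I^0_n\}$, $\{I^1_n\}$ onto the even and odd coordinates (you swap the parities, which is immaterial), observe that the interleaved copies are disjoint so the resulting set is small, and then note that any interval partition of the interleaved space, projected by dividing endpoints by $2$, induces interval partitions of the base that differ by only $\pm 1$ between the even and odd sides — so by the half‑offset structure one of the $I^i_m$ must be cut roughly in half, after which the Case~1 machinery of Theorem~\ref{main2} (via Lemma~\ref{key} and Lemma~\ref{prob}) produces the offending real. You are in fact more explicit than the paper about the $\lfloor c_m/2\rfloor$ versus $\lceil c_m/2\rceil$ bookkeeping, which is a welcome addition; the one place you overcomplicate slightly is in invoking the full Case~1/Case~2 dichotomy — for an interval partition the boundary points automatically project into some $I^0_j\cap I^1_k$ and hence always split one of the two roughly in half, so Case~2 never arises here, which is why the paper's argument goes straight to Case~1.
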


\begin{proof}
The proof is a modification of the previous argument.

Let $I^0_n,I^1_n, J^0_n$ and $J^1_n$ for $n \in \omega$ be like in
the proof of \ref{main2}. 
Let $\bar{I}^0_n=\{2k: k \in I^0_n\}$ and $\bar{I}^1_n=\{2k+1: k\in
I^1_n\}$ for $n \in \omega$ and let $\bar{J}^0_n \subset 2^{\bar{I}^0_n},
\bar{J}^1_n\subset 2^{\bar{I}^1_n}$ for $n \in
\omega$ be the induced sets.
Note that $(\{\bar{I}^0_n, \bar{I}^1_n\}, \{\bar{J}^0_n,\bar{J}^1_n\})_{n \in
  \omega}$ is a small set. We will show that this set is not
small$^\star$.
Suppose that $(\{\bar{I}^0_n, \bar{I}^1_n\}, \{\bar{J}^0_n,\bar{J}^1_n\})_{n \in
  \omega} \subseteq (I_n,J_n)_{n \in \omega}$, where
$I_n=[k_n,k_{n+1}) $ for an increasing sequence $\{k_n: n\in
\omega\}$. 

Without loss of generality we can assume that for every $n \in \omega $ there
exists $i \in \{0,1\}$ and $m \in \omega$ such that

\begin{enumerate}
\item $I^i_m \subseteq I_{n} \cup I_{n+1}$,
\item $\dfrac{|I^i_m|}{4} \leq |I_n\cap I^i_m| \leq
  \dfrac{3|I^i_m|}{4}$,
\item $\dfrac{|I^i_m|}{4} \leq |I_{n+1}\cap I^i_m| \leq
  \dfrac{3|I^i_m|}{4}$.
\end{enumerate}

To get (1) we combine consecutive intervals $I_n$ to make sure that
each $I^i_m$ belongs to at most two of them. Points (2) and (3) are a
consequence of the properties of the original sequences $\{I^0_n,I^1_n:
n \in \omega\}$, namely that each integer  belongs to exactly two of these
intervals and that intersecting intervals cut each other approximately
in half. The following example illustrates the procedure for finding
$i$ and $m$: If $k_n$ is
even then $k_n/2 $ belongs to $I^0_j \cap I^1_k$ with $k-j$ equal to 0
or 1. The value of $i$ and $m$ depend on whether $k_n/2$
belongs to the lower or upper half of the said interval. The case when
$k_n $ is odd is similar.

The rest of the proof is exactly like Case  1 of Theorem \ref{main2}.
\end{proof}

\end{document}